\theoremstyle{plain}
\newtheorem{theorem}{Theorem}[section]
\newtheorem{corollary}[theorem]{Corollary}
\newtheorem{lemma}[theorem]{Lemma}
\newtheorem{proposition}[theorem]{Proposition}
\theoremstyle{remark}
\theoremstyle{definition}
\newcommand{\be}[1]{\begin{equation}\label{#1}}
\newcommand{\ee}{\end{equation}}
\renewcommand{\(}{\left(}
\renewcommand{\)}{\right)}
\newcommand{\R}{{\mathbb R}}
\newcommand{\N}{{\mathbb N}}
\newcommand{\ird}[1]{\int_{\R^d}{#1}\,dx}
\newcommand{\nrm}[2]{\left\|{#1}\right\|_{\mathrm L^{#2}(\R^d)}}
\begin{document}

\title{\itshape Flows and functional inequalities for fractional operators}

\author{Jean Dolbeault$^{\ast,{\rm a}}$\thanks{$^{\rm a}$Jean Dolbeault, Email: \href{mailto:dolbeaul@ceremade.dauphine.fr}{dolbeaul@ceremade.dauphine.fr}\vspace{1pt}} and An Zhang$^{\ast,{\rm b}}$\thanks{$^{\rm b}$Corresponding author: An Zhang, Email: \href{mailto:zhang@ceremade.dauphine.fr}{zhang@ceremade.dauphine.fr}\vspace{6pt}}\\
\vspace{6pt} $^{\ast}${\em{CEREMADE (CNRS UMR n$^\circ$ 7534), PSL research university, Universit\'e Paris-Dauphine, Place de Lattre de Tassigny, 75775 Paris 16, France}}}

\maketitle

\begin{abstract}
This paper collects results concerning global rates and large time asymptotics of a fractional fast diffusion on the Euclidean space, which is deeply related with a family of fractional Gagliardo-Nirenberg-Sobolev inequalities. Generically, self-similar solutions are not optimal for the Gagliardo-Nirenberg-Sobolev inequalities, in strong contrast with usual standard fast diffusion equations based on non-fractional operators. Various aspects of the stability of the self-similar solutions and of the entropy methods like \emph{carr\'e du champ} and \emph{R\'enyi entropy powers} methods are investigated and raise a number of open problems.
\end{abstract}

\begin{keywords} fractional Gagliardo-Nirenberg-Sobolev inequality; fractional Sobolev inequality; fractional fast diffusion equation; self-similar solutions; asymptotic behavior; intermediate asymptotics; rate of convergence; entropy methods; \emph{carr\'e du champ;} R\'enyi entropy powers; entropy -- entropy production inequality; self-similar variables; linearization\end{keywords}

\begin{classcode} Primary: 26A33, 35R11; Secondary: 35A23, 35B33, 35B40, 35K65.


\end{classcode}

\section{Introduction}\label{Sec:Intro}

Recently many papers have been devoted to the extension of results involving second order partial differential operators to fractional operators. More specifically, fractional diffusion equations have been studied from the point of view of existence, comparison and regularity of the solutions, not only in the linear case but also in the framework of nonlinear models of porous medium or fast diffusion type. Concerning modeling issues, fractional diffusions are usually motivated by microscopic jump processes. One can refer to~\cite{bouchaud1990anomalous,metzler2000random} for an extended review of models arising from various areas of physics. We will not go in this direction. Relying on known theoretical results like the ones of~\cite{MR3082241,MR2847534,MR2773189,MR2954615,MR3205647}, our approach aims at the description of fundamental qualitative properties of such  equations in relation with closely associated, basic functional inequalities.

Standard fast diffusion or porous medium equations have simple features which arise from the homogeneity of the nonlinear term or from the fact that the diffusion operator is of second-order. These features explain the special role of self-similar solutions, known as Barenblatt-Pattle, in the large time regimes: see~\cite{MR3191976} for an historical presentation. Also remarkable is the fact discovered in~\cite{MR1940370} that the Barenblatt-Pattle solutions are optimal for some Gagliardo-Nirenberg-Sobolev inequalities, which are essential to measure the asymptotic stability of the self-similar solutions using relative entropy methods. Entropy functionals are indeed deeply connected with fast diffusion equations, as these equations can be seen as gradient flows of the entropies with respect to the Wasserstein's distance (see~\cite{MR1842429}).

In the context of fractional fast diffusion or porous medium equations, it is therefore very natural to question the role of self-similar solutions in terms of large time asymptotics and related functional inequalities. However, fractional order derivatives and nonlinearities do not combine well and this raises a number of difficulties. Self-similar solutions which generalize the Barenblatt-Pattle solutions to fractional diffusion have been studied in connection with large time asymptotics in~\cite{MR2847534,MR2773189} and~\cite{MR2817383,MR3294409,MR3279352}. There is a certain flexibility in the generalization of the standard nonlinear diffusion equations: this has been investigated in~\cite{MR3239623} and~\cite{MR3334174}.

In the present paper, we shall specifically rely on the fractional fast diffusion equation
\[
\frac{\partial u}{\partial t}=\nabla\cdot\(\sqrt u\,\nabla(-\Delta)^{-s}\,u^{m-\frac12}\)
\]
with $m<1$, which seems particularly well adapted to entropy -- entropy production inequalities as we shall see below. Such an equation has already been considered in~\cite[Equation~(MG)]{MR3334174} from the point of view of self-similarity. Our purpose is to clarify the interplay of this equation with fractional Gagliardo-Nirenberg-Sobolev inequalities (see Section~\ref{Sec:FFDE}), and observe that, generically, optimal functions for these inequalities differ from self-similar solutions that are supposed to govern the large time behavior of the solutions of the evolution equation (Proposition~\ref{Prop:SelfSimEL}). Beyond some results about, \emph{e.g.}, the existence of optimal solutions for the interpolation inequalities (Proposition~\ref{Prop:GNS}), we raise a number of open questions concerning the large time asymptotics in Section~\ref{Sec:As} and the applicability of the Bakry-Emery, or \emph{carr\'e du champ}, method in Section~\ref{Sec:BE}.

\section{Preliminaries: a fractional interpolation inequality and a fractional fast diffusion flow}\label{Sec:Prelim}

\subsection{The fractional Sobolev inequality}\label{Sec:Sobolev}

According to~\cite{MR3179693}, for any $\alpha\in(0,d)$, with $q=\frac{2\,d}{d-\alpha}$, the fractional Sobolev inequality in $\R^d$ can be written as
\be{Sobolev}
\|w\|_{\mathring{\mathrm H}^\frac\alpha2(\R^d)}^2\ge\mathsf S_{d,\alpha}\(\ird{|w|^q}\)^\frac2q\quad\forall\,w\in\mathring{\mathrm H}^\frac\alpha2(\R^d)
\ee
where $\mathring{\mathrm H}^\frac\alpha2(\R^d)$ is the space of all tempered distributions $w$ such that
\[
\hat w\in\mathrm L^1_{\mathrm{loc}}(\R^d)\quad\mbox{and}\quad\|w\|_{\mathring{\mathrm H}^\frac\alpha2(\R^d)}^2:=\int_{\R^d}|\xi|^\alpha|\,\hat w|^2\,d\xi<\infty\,.
\]
Here $\hat w$ denotes the Fourier transform of $w$ and the optimal constant is given by
\[
\mathsf S_{d,\alpha}=2^\alpha\,\pi^\frac\alpha2\,\tfrac{\Gamma(\frac{d+\alpha}2)}{\Gamma(\frac{d-\alpha}2)}\,\Bigl(\tfrac{\Gamma(\frac d2)}{\Gamma(d)}\Bigr)^{\alpha/d}\,.
\]
Up to translations, dilations and multiplications by a nonzero constant, the optimal function is
\[
w_\star(x)=\(1+|x|^2\)^{-\frac{d-\alpha}2}\quad\forall\,x\in\R^d\,.
\]
It is easy to check that $w_\star$ solves
\be{Eqn:AT}
(-\Delta)^\frac\alpha2w_\star=\mathsf C_{d,\alpha}\,w_\star^\frac{d+\alpha}{d-\alpha}\quad\mbox{where}\quad\mathsf C_{d,\alpha}=2^\alpha\,\tfrac{\Gamma(\frac{d+\alpha}2)}{\Gamma(\frac{d-\alpha}2)}\,.
\ee
With the notation $\mathcal D_\alpha:=\nabla\,(-\Delta)^\frac{\alpha-1}2$, we get that $(-\Delta)^\frac\alpha2=\mathcal D_{\alpha/2}^*\,\mathcal D_{\alpha/2}$ and~\eqref{Sobolev} can then be written as
\[
\|w\|_{\mathring{\mathrm H}^\frac\alpha2(\R^d)}^2=\nrm{\mathcal D_{\alpha/2}w}2^2\ge\mathsf S_{d,\alpha}\,\nrm wq^2\quad\forall\,w\in\mathring{\mathrm H}^\frac\alpha2(\R^d)\,.
\]
The case $\alpha=2$ was established in~\cite{MR0407905,Talenti-76}, but one may refer to~\cite{MR0289739} in case $d=3$ and even to~\cite{bliss1930integral} for some early considerations on optimal functions. In the fractional case $\alpha\neq2$, the dual form of the optimal inequality, \emph{i.e.}, the Hardy-Littlewood-Sobolev inequality, was established in~\cite{MR717827}, while considerations on the case \hbox{$\alpha=1$} in connection with trace issues can be found in~\cite{MR962929}. Later work include~\cite{MR1307010} and~\cite{MR2737789} among many other related papers. The issue of the symmetry of the optimal functions, up to translations, was considered in~\cite{MR717827} and established in~\cite{MR2200258} using the moving planes method.

\subsection{A fractional Gagliardo-Nirenberg-Sobolev inequality}

With $q=\frac{2\,d}{d-\alpha}$, a simple H\"older interpolation shows that
\[
\nrm w{2p}\le\nrm w{p+1}^{1-\vartheta}\,\nrm wq^\vartheta\quad\mbox{with}\quad\vartheta=\frac dp\,\frac{p-1}{d+\alpha-p\,(d-\alpha)}\,.
\]
Together with Sobolev's inequality, this provides an interpolation inequality of Gagliardo-Nirenberg-Sobolev type. More precisely, we have the following statement.
\begin{proposition}\label{Prop:GNS} Assume that $\alpha\in(0,d)$ and $p\in(1,q]$ with $q=\frac{2\,d}{d-\alpha}$. With the above notations, the following Gagliardo-Nirenberg-Sobolev inequality
\be{GNS}
\nrm w{2p}\le\mathsf C_{\mathrm{GNS}}\,\|w\|_{\mathring{\mathrm H}^\frac\alpha2(\R^d)}^\vartheta\,\nrm w{p+1}^{1-\vartheta}\quad\forall\,w\in\mathcal D(\R^d)
\ee
holds with an optimal constant $\mathsf C_{\mathrm{GNS}}\le\mathsf S_{d,\alpha}^{\vartheta/2}$. Moreover, there exists a positive optimal function in $\mathring{\mathrm H}^\frac\alpha2(\R^d)\cap\mathrm L^{p+1}(\R^d)$ which solves the Euler-Lagrange equation
\be{EL}
(-\Delta)^\frac\alpha2w+w^p-w^{2\,p-1}=0\,,\quad x\in\R^d\,.
\ee
\end{proposition}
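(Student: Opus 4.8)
The inequality \eqref{GNS}, together with the stated bound on the optimal constant $\mathsf C_{\mathrm{GNS}}$, is the easy part: I would simply chain the Hölder interpolation displayed just above with the fractional Sobolev inequality \eqref{Sobolev} (used in the form that controls $\nrm wq$ by $\|w\|_{\mathring{\mathrm H}^\frac\alpha2(\R^d)}$), and then extend the resulting estimate from $\mathcal D(\R^d)$ to $\mathring{\mathrm H}^\frac\alpha2(\R^d)\cap\mathrm L^{p+1}(\R^d)$ by density. The genuine content of the statement is the \emph{existence} of a positive optimizer and the fact that it solves \eqref{EL}, which I would obtain by a direct variational argument.

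The plan is to realize the best constant as a supremum, $\mathsf C_{\mathrm{GNS}}=\sup_w\frac{\nrm w{2p}}{\|w\|_{\mathring{\mathrm H}^\frac\alpha2(\R^d)}^\vartheta\,\nrm w{p+1}^{1-\vartheta}}$, and to produce a maximizer. First I would reduce to nonnegative, radially symmetric nonincreasing competitors: replacing $w$ by $|w|$ leaves every $\mathrm L^r$ norm unchanged and does not increase $\|w\|_{\mathring{\mathrm H}^\frac\alpha2(\R^d)}$ (since $\big||w(x)|-|w(y)|\big|\le|w(x)-w(y)|$ in the Gagliardo representation of the seminorm), while symmetric decreasing rearrangement preserves the $\mathrm L^r$ norms and only decreases that seminorm (the fractional Pólya--Szegő inequality). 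The quotient is invariant under the scalings $w\mapsto\mu\,w$ and $w\mapsto\lambda^\gamma w(\lambda\,\cdot)$, whose distinct scaling dimensions let me fix two normalizations, say $\|w_n\|_{\mathring{\mathrm H}^\frac\alpha2(\R^d)}=\nrm{w_n}{p+1}=1$, along a maximizing sequence $(w_n)$ that is then bounded in $\mathring{\mathrm H}^\frac\alpha2(\R^d)\cap\mathrm L^{p+1}(\R^d)$, with $\nrm{w_n}{2p}\to\mathsf C_{\mathrm{GNS}}$.

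The heart of the proof, and the step I expect to be the main obstacle, is recovering compactness on the whole of $\R^d$, where translation and dilation invariance preclude any naive weak-limit argument. I would run P.-L. Lions' concentration--compactness principle, in the version adapted to $\mathring{\mathrm H}^\frac\alpha2(\R^d)$, on the densities associated with $(w_n)$. Vanishing is excluded because $\nrm{w_n}{2p}$ stays bounded away from $0$, together with the fractional vanishing lemma: a bounded sequence in $\mathring{\mathrm H}^\frac\alpha2(\R^d)$ whose mass fails to concentrate in any ball tends to $0$ in every subcritical $\mathrm L^r$. Dichotomy is excluded by the strict subadditivity inherited from strict subcriticality $p<q$, so that a split configuration is strictly worse than a concentrated one and the maximizing sequence cannot separate into pieces; relatedly, concentration into a point (a Sobolev-type bubble) is forbidden because along such a bubble the subcritical factor $\nrm w{p+1}$ degenerates and cannot realize the supremum. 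Concentration--compactness then yields, after translation, strong convergence of $(w_n)$ in $\mathrm L^{2p}(\R^d)$ and $\mathrm L^{p+1}(\R^d)$; combined with weak lower semicontinuity of $\|\cdot\|_{\mathring{\mathrm H}^\frac\alpha2(\R^d)}$, the weak limit $w$ is a genuine maximizer.

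Finally I would derive \eqref{EL} and positivity. A maximizer is a critical point of the quotient, hence solves $A\,(-\Delta)^\frac\alpha2w+B\,w^p=C\,w^{2\,p-1}$ for positive constants $A,B,C$ built from the norms of $w$; the two scalings $w\mapsto\mu\,w(\lambda\,\cdot)$ can be chosen, by solving two monomial equations in $\mu$ and $\lambda$, to normalize $A=B=C=1$, which is precisely \eqref{EL}. Positivity follows from a fractional strong maximum principle: $w\ge0$ is nontrivial, and at any hypothetical zero $x_0$ the equation gives $(-\Delta)^\frac\alpha2w(x_0)=w^{2p-1}(x_0)-w^p(x_0)=0$, whereas the singular-integral representation gives $(-\Delta)^\frac\alpha2w(x_0)=-c_{d,\alpha}\int_{\R^d}\frac{w(y)}{|x_0-y|^{d+\alpha}}\,dy\le0$ with equality only if $w\equiv0$; comparing the two forces $w\equiv0$, a contradiction, so $w>0$ everywhere. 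The smoothness needed to make these pointwise manipulations rigorous would come from a standard bootstrap based on \eqref{EL} and the regularity theory for the fractional Laplacian recalled in the references.
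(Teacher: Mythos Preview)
Your strategy is sound and aligns with the paper's: both obtain \eqref{GNS} by H\"older plus Sobolev, and both attack existence of an optimizer via concentration--compactness, ruling out vanishing by a Lions-type lemma, dichotomy by strict subadditivity coming from subcriticality $p<q$, and concentration by the critical Sobolev control. The differences are in packaging. The paper recasts the problem as the constrained additive minimization $\mathcal I_{\mathsf M}=\inf\big\{\|w\|_{\mathring{\mathrm H}^\frac\alpha2(\R^d)}^2+\nrm w{p+1}^{p+1}:\nrm w{2p}^{2p}=\mathsf M\big\}$ and shows by scaling that $\mathcal I_{\mathsf M}=\mathsf M^\gamma\,\mathcal I_1$ with $\gamma<1$, which makes the strict subadditivity needed to exclude dichotomy a one-line consequence; this also sidesteps your double normalization $\|w_n\|_{\mathring{\mathrm H}^\frac\alpha2(\R^d)}=\nrm{w_n}{p+1}=1$, which incidentally fails to fix the scale when $p=q-1$ since the two norms then share the same dilation exponent. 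You add a symmetric decreasing rearrangement step, but your justification via the Gagliardo double integral and the fractional P\'olya--Szeg\H{o} inequality is only valid for $\alpha\in(0,2)$, whereas the proposition covers all $\alpha\in(0,d)$; the paper deliberately avoids symmetry considerations. Since you still invoke translation in the compactness step, the rearrangement is in any case redundant for the existence argument and could simply be dropped. Conversely, your strong-maximum-principle argument for strict positivity is a genuine addition: the paper's sketch works with a nonnegative minimizing sequence but does not explain why the limit is everywhere positive.
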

In the above statement $\mathcal D(\R^d)$ denotes the space of smooth functions with compact support, but it is simple to extend it by density to the space $\big\{w\in\mathring{\mathrm H}^\frac\alpha2(\R^d)\,:\,\ird{w^{p+1}}<\infty\big\}$. See~\cite[Theorem~2.1]{MR2961251} for details. When $p=\frac q2=\frac d{d-\alpha}$, then $\vartheta=1$ and $\mathsf C_{\mathrm{GNS}}=\mathsf S_{d,\alpha}$. The symmetry of the optimal functions is not considered here and the interested reader is invited to refer for instance to~\cite{doi:10.1080/00036811.2014.898273} for a related problem.

\begin{proof}[Sketch of the proof] This result is out of the scope of the present paper and the proof is relatively standard, so let us give only a simple sketch. A reader interested in further details is invited to refer to~\cite{Trabelsi} and~\cite{Hajaiej201317} for further details in similar cases. The above result can actually be seen as a special case of those dealt with in~\cite{MR2961251}. The convergence of sequences in the critical case has been analyzed in details in~\cite{MR1632171}. We can therefore assume that $p\in(1,q)$.

An optimal function for~\eqref{GNS} can be obtained by considering
\[
\mathcal I_{\mathsf M}:=\inf\left\{\|w\|_{\mathring{\mathrm H}^\frac\alpha2(\R^d)}^2+\nrm w{p+1}^{p+1}\,:\,\ird{w^{2p}}=\mathsf M\right\}\,.
\]
An optimisation on $\lambda>0$ of $w_\lambda=\lambda^\frac d{2p}\,w(\lambda\cdot)$ shows that the minimization of $\mathcal I_{\mathsf M}$ is equivalent to the characterization of the optimality case in~\eqref{GNS}, and also that
\[
\mathcal I_{\mathsf M}=\mathsf M^\gamma\,\mathcal I_1\quad\mbox{with}\quad\gamma=\frac{d+\alpha-p\,(d-\alpha)}{d-p\,(d-2\,\alpha)}<1\,.
\]

With these preliminary observations, a concentration-compactness analysis can be done as follows. Let us consider a sequence $(w_n)_{n\in\N}$ of nonnegative function such that $\ird{w_n^{2p}}=\mathsf M$ for any $n\in\N$ and
\[
\lim_{n\to\infty}\|w_n\|_{\mathring{\mathrm H}^\frac\alpha2(\R^d)}^2+\nrm{w_n}{p+1}^{p+1}=\mathcal I_{\mathsf M}\,.
\]
1) The following result has been stated in~\cite[Lemma I.1, p. 231]{MR778974}.
\begin{lemma}\label{Lem:Lions} Let $R>0$ and $p\in[1,2^*/2)$. If $(w_n)_n$ is bounded in $\mathrm H^1(\R^d)$ and if
\[
\limsup_{n\to\infty}\int_{B_R(x)}|w_n|^{2p}\,dy=0
\]
for any $x\in\R^d$, then $\lim_{n\to\infty}\nrm{w_n}r=0$ for any $r\in[2,2^*)$.\end{lemma}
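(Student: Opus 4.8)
The statement is the classical ``vanishing'' alternative of P.-L.~Lions, and the plan is to combine a finite-overlap covering of $\R^d$ with a local interpolation inequality on balls of fixed radius. I would prove that $\nrm{w_n}{r}\to0$ for every $r\in(2p,2^*)$, which is the form used in the concentration-compactness step. The starting point is an elementary bound on each ball $B_R(x)$: choosing $\lambda\in(0,1)$ by $\tfrac1r=\tfrac{1-\lambda}{2p}+\tfrac\lambda{2^*}$, Hölder's inequality gives $\|w\|_{\mathrm L^r(B_R(x))}\le\|w\|_{\mathrm L^{2p}(B_R(x))}^{1-\lambda}\,\|w\|_{\mathrm L^{2^*}(B_R(x))}^{\lambda}$, and the Sobolev embedding $\mathrm H^1(B_R)\hookrightarrow\mathrm L^{2^*}(B_R)$, with a constant $\mathsf C$ independent of the center $x$, yields $\|w\|_{\mathrm L^{2^*}(B_R(x))}\le\mathsf C\,\|w\|_{\mathrm H^1(B_R(x))}$.

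First I would fix the exponent $r=r_0$ for which the gradient contribution aggregates linearly, namely the unique $r_0\in(2p,2^*)$ such that $\lambda\,r_0=2$; this is admissible because $\lambda(r)\,r$ increases continuously from $0$ at $r=2p$ to $2^*$ at $r=2^*$, and $2\in(0,2^*)$. Raising the previous inequality to the power $r_0$ then gives, on each ball, $\int_{B_R(x)}|w|^{r_0}\,dy\le\mathsf C\,\bigl(\int_{B_R(x)}|w|^{2p}\,dy\bigr)^{(1-\lambda)r_0/(2p)}\,\|w\|_{\mathrm H^1(B_R(x))}^{2}$.

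Next I would cover $\R^d$ by a lattice of balls $\{B_R(x_i)\}_i$ with finite overlap, so that every point lies in at most $N=N(d)$ of them. Bounding the first factor on the right by its supremum over all centers, summing over $i$, and using $\sum_i\|w\|_{\mathrm H^1(B_R(x_i))}^2\le N\,\|w\|_{\mathrm H^1(\R^d)}^2$, I obtain $\nrm{w}{r_0}^{r_0}\le\mathsf C\,N\,\bigl(\sup_{x\in\R^d}\int_{B_R(x)}|w|^{2p}\,dy\bigr)^{(1-\lambda)r_0/(2p)}\,\|w\|_{\mathrm H^1(\R^d)}^{2}$. Applied to $w_n$, the uniform $\mathrm H^1$-bound and the hypothesis that the supremum over $x$ of the local $\mathrm L^{2p}$-mass tends to $0$ force $\nrm{w_n}{r_0}\to0$. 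To reach every $r\in(2p,2^*)$, I would interpolate between $r_0$ and an exponent on the opposite side of $r$, using that $(w_n)_n$ is bounded in $\mathrm L^{2p}(\R^d)$ and in $\mathrm L^{2^*}(\R^d)$ (both consequences of the $\mathrm H^1$-bound and the global Sobolev inequality); the $\mathrm L^{r_0}$-vanishing then propagates across the whole interval.

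The main obstacle is the bookkeeping in the second and third steps: the exponent must be tuned so that the power of the $\mathrm H^1$-norm is exactly $2$, since only then does the finite-overlap property let a single global $\mathrm H^1$-bound absorb the sum of the local gradient contributions; any other power would require controlling a sum of powers of local norms, which is not available. The remaining points---the uniformity of the Sobolev constant over translates and the existence of a finite-overlap lattice covering---are standard and cause no real difficulty.
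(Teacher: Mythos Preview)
The paper does not prove this lemma at all: it simply quotes it from \cite[Lemma~I.1, p.~231]{MR778974} and uses it as a black box inside the sketch of the proof of Proposition~\ref{Prop:GNS}. Your argument is the standard proof of Lions' vanishing lemma---local H\"older interpolation, the translation-invariant Sobolev embedding on balls, the choice of the exponent $r_0$ so that $\lambda r_0=2$, and the finite-overlap summation---and the bookkeeping is correct.

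One small remark on the range of $r$. As stated in the paper the conclusion for $r=2$ is actually false (a spreading sequence gives an easy counterexample), and the classical Lions statement is for $r\in(2,2^*)$. Your restriction to $r\in(2p,2^*)$ is exactly what the application needs; if you also want the full open interval $(2,2^*)$, simply interpolate your $\mathrm L^{r_0}$-vanishing against the $\mathrm L^2$-bound coming from $\mathrm H^1$ rather than against $\mathrm L^{2p}$.
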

Here $2^*=\infty$ if $d=1$ or $2$, and $2^*=2\,d/(d-2)$ if $d\ge3$. By an analogue of this result in $\mathring{\mathrm H}^\frac\alpha2(\R^d)$, with $2^*/2$ replaced by $q$, \emph{vanishing} cannot occur, which means that, after eventually replacing $w_n$ by $w_n(\cdot+y_n)$ for some unbounded sequence $(y_n)_{n\in\N}$ of points in $\R^d$, we have
\[
\lim_{R\to\infty}\lim_{n\to\infty}\int_{B_R(0)}|w_n|^{2p}\,dy>0\,.
\]
2) Since $\gamma<1$, it is straightforward to check that
\[
\mathcal I_{\mathsf M_1+\mathsf M_2}\le\mathcal I_{\mathsf M_1}+\mathcal I_{\mathsf M_2}\,,
\]
which prevents \emph{dichotomy.} As a consequence, for any $\varepsilon>0$, there exists some $R>0$, large enough, such that
\[
\lim_{n\to\infty}\int_{B_R(0)}|w_n|^{2p}\,dy\ge\mathsf M-\varepsilon\,.
\]
In other words, this means that the sequence $(w_n^{2p})_{n\in\N}$ is tightly relatively compact in the sense of measures.
\\
3) \emph{Concentration} is forbidden by Sobolev's inequality~\eqref{Sobolev}. Altogether, this proves that $(w_n)_{n\in\N}$ is weakly relatively compact in $\mathrm L^{2p}(\R^d)$ (see for instance~\cite[Lemma~2.2]{Autuori:2013qr}) and the conclusion holds by lower semi-continuity.

Let us take the logarithm of both sides in~\eqref{GNS}. An optimal function is a minimizer of the functional
\[
w\mapsto\vartheta\,\log\|w\|_{\mathring{\mathrm H}^\frac\alpha2(\R^d)}+(1-\vartheta)\,\log\nrm w{p+1}-\log\,\nrm w{2p}\,.
\]
A variation shows that
\[
\frac{\vartheta\,(-\Delta)^\frac\alpha2w}{\|w\|_{\mathring{\mathrm H}^\frac\alpha2(\R^d)}}+\frac{(1-\vartheta)\,w^p}{\nrm w{p+1}}-\frac{w^{2\,p-1}}{\nrm w{2p}}=0\,.
\]
Using the homogeneity in~\eqref{GNS} and a multiplication by a constant, the coefficients can be adjusted so that $w$ solves~\eqref{EL}.\end{proof}

\subsection{A fractional fast diffusion equation}\label{Sec:FFDE}

Let us consider the case
\[
\alpha=2\,(1-s)\,.
\]
Any nonnegative solution $u$ of
\be{Eqn}
\frac{\partial u}{\partial t}=\nabla\cdot\(\sqrt u\,\nabla(-\Delta)^{-s}\,w\)\quad\mbox{with}\quad w=u^{m-\frac12}
\ee
which is smooth enough and has good decay properties as $|x|\to\infty$ is such that
\be{EI}
\mathsf E'=\tfrac{2\,m\,(1-m)}{2\,m-1}\,\mathsf I\,,
\ee
where the \emph{entropy} $\mathsf E$ and the \emph{Fisher information} $\mathsf I$ are defined respectively by
\[
\mathsf E[u]:=\ird{u^m}\quad\mbox{and}\quad\mathsf I[u]:=\ird{\left|\nabla^{(1-s)}u^{m-\frac12}\right|^2}\,,
\]
and $\nabla^{(1-s)}$ is defined by
\[
\nabla^{(1-s)}w:=\nabla(-\Delta)^{-s/2}w\,.
\]
Above we consider $\mathsf E[u(t,\cdot)]$ as a function of $t$ if $u$ solves~\eqref{Eqn} and $\mathsf E'$ denotes the $t$-derivative of $\mathsf E[u(t,\cdot)]$.

Because the right hand side in~\eqref{Eqn} is in divergence form, we know that
\[
\mathsf M:=\ird u
\]
is independent of $t\ge0$ for any solution $u$ which is smooth enough, and with enough decay as $|x|\to+\infty$. Let us assume that
\[
m\ge m_1:=\frac12+\frac1q=1-\frac\alpha{2\,d}\,.
\]
If $m=m_1$ and $v_\star:=w_\star^q$, then $v_\star(x)=\(1+|x|^2\)^{-d}$ satisfies
\[
v_\star^{m_1-\frac12}=w_\star\quad\mbox{and}\quad(-\Delta)^{1-s}\(v_\star^{m_1-\frac12}\)=\mathsf C_{d,\alpha}\,v_\star^\frac{d+\alpha}{2\,d}=\mathsf C_{d,\alpha}\,\(1+|x|^2\)^{s-1}\,\sqrt{v_\star}
\]
according to~\eqref{Eqn:AT}.

Assume now that $m\in[m_1,1)$. If we introduce a \emph{generalized R\'enyi entropy power functional}~$\mathsf F$ as in~\cite{MR3200617,1501}, defined by
\[
\mathsf F:=\mathsf E^\sigma
\]
with
\[
\sigma:=\frac2\vartheta\,\frac{1-\vartheta}{p+1}+1=\frac{m-m_c}{1-m}\,,\quad p=\frac1{2\,m-1}\quad\mbox{and}\quad m_c:=1-\frac\alpha d\,,
\]
then by applying Proposition~\ref{Prop:GNS} to $w=u^{m-\frac12}$ we obtain that
\[
\mathsf F'\ge\kappa
\]
using~\eqref{EI}, where the constant $\kappa$ depends only on $d$, $s$ (or $\alpha$), $m$ and $\mathsf M$, and can be computed as
\[
\kappa:=\frac{2\,m\,(1-m)}{2\,m-1}\,\sigma\(\frac{\mathsf M}{\mathsf C_{\mathrm{GNS}}^{2p}}\)^\frac1{p\,\vartheta}=\frac{2\,m}{2\,m-1}\,(m-m_c)\(\frac{\mathsf M}{\mathsf C_{\mathrm{GNS}}^{2p}}\)^\frac{d+\alpha-p\,(d-\alpha)}{d\,(p-1)}\,.
\]

\subsection{Self-similar solutions}

Let us introduce the time-dependent change of variables
\be{TDRS}
u(t,x)=\frac1{R^d}\,v\(\log R,\frac xR\)
\ee
where $R=R(t)$ is given by the ordinary differential equation
\be{ODE}
R^{\mu-1}\,\frac{dR}{dt}=1\,,\quad R(0)=1\,,
\ee
and the parameter $\mu$ is defined by
\[
\mu:=d\,(m-m_c)\quad\mbox{with}\quad m_c=1-\frac\alpha d\,.
\]
If $m$ is in the range $[m_1,1)$, it is straightforward to check that $\mu$ is positive and the scale $R$ has the explicit expression
\[
R(t)=\(1+\mu\,t\)^{1/\mu}\quad\forall\,t\ge0\,.
\]
If $u$ solves~\eqref{Eqn}, it is then easy to check that $v$ is a solution of
\be{RescaledEqn}
\frac{\partial v}{\partial t}=\nabla\cdot\left[\sqrt v\(\nabla(-\Delta)^{-s}\,v^{m-\frac12}+x\,\sqrt v\)\right]\,.
\ee

This change of variable is classical in case of non-fractional operators but has also been considered in the case of the fractional Laplacian, for instance in~\cite{MR2817383,MR3294409}. A detailed analysis of self-similar profiles is available in~\cite{MR3334174}. In the fast diffusion case, for an equation related with~\eqref{Eqn}, we refer to~\cite{MR3298369}, where the role of Barenblatt type solutions for regularization effects has been clarified.
\begin{proposition}\label{Prop:SelfSimEL} Let $p=\frac1{2\,m-1}$. Under the assumptions of Proposition~\ref{Prop:GNS}, if $w$ is an optimal function for~\eqref{GNS}, then $v=w^{2p}$ is not a bounded, positive, stationary solution of~\eqref{RescaledEqn} such that $\mathsf I[v]$ is finite, unless $s=0$, $\alpha=2$ or $m=\frac{d+1}{d+2\,(1-s)}$.\end{proposition}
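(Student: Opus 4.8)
The plan is to show that solving the Euler--Lagrange equation~\eqref{EL} is incompatible with being a stationary solution of~\eqref{RescaledEqn}, except in the listed degenerate cases. Write $w=v^{m-1/2}$, so that $v=w^{2p}$, $\sqrt v=w^p$ and $v^{m-1/2}=w$. The first step is to argue that stationarity forces the current to vanish. A bounded positive stationary solution satisfies $\nabla\cdot(w^p\,\xi)=0$ with $\xi:=\nabla(-\Delta)^{-s}w+x\,w^p$. Since optimizers and self-similar profiles may be taken radial, $\xi$ is a radial field, hence a gradient $\xi=\nabla\Psi$; testing the equation against $\Psi$ and integrating by parts gives $\ird{w^p\,|\nabla\Psi|^2}=0$, so $\xi\equiv0$ on $\{v>0\}=\R^d$. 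This is where boundedness and the finiteness of $\mathsf I[v]$ are used, to make the boundary terms vanish and the integral finite. Up to the dilation invariance of~\eqref{GNS} (optimizers form a scaling family, whereas~\eqref{RescaledEqn} is not scale invariant), the flux-zero condition reads $\nabla(-\Delta)^{-s}w=-\lambda\,x\,w^p$ for some constant $\lambda>0$ to be determined.

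Next I would combine this with~\eqref{EL}. Taking the divergence and using $(-\Delta)^{\alpha/2}=(-\Delta)\,(-\Delta)^{-s}$ (recall $\alpha/2=1-s$) turns flux-zero into $(-\Delta)^{\alpha/2}w=\lambda\,\nabla\cdot(x\,w^p)=\lambda\,(d\,w^p+x\cdot\nabla w^p)$. Subtracting~\eqref{EL} eliminates the nonlocal operator and leaves the pointwise, purely local identity
\[
w^{2\,p-1}=(1+\lambda\,d)\,w^p+\lambda\,x\cdot\nabla w^p .
\]
Radially this is a first-order ordinary differential equation, whose bounded, smooth, positive solutions are the Barenblatt-type profiles $w(x)=(a+c\,|x|^2)^{-1/(p-1)}$; this also fixes $\lambda$ and the scale $c$ in terms of $d$ and $p$. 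Testing this identity against $w^k$ merely reproduces moment relations of the form $\int_{\R^d}w^{2\,p-1+k}=(1+\tfrac{\lambda\,d\,k}{p+k})\int_{\R^d}w^{p+k}$, which are consistent with the fractional Pohozaev identity for~\eqref{EL}; by themselves they determine $\lambda$ but do not yet produce a contradiction, so genuinely nonlocal information is needed.

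The decisive step is therefore to feed this explicit profile back into the nonlocal equation~\eqref{EL} --- equivalently, to test the relation $(-\Delta)^{-s}w=\lambda'\,w$ forced by flux-zero on a Barenblatt profile. Using the explicit action of $(-\Delta)^{\alpha/2}$ on $(1+|x|^2)^{-\beta}$ with $\beta=1/(p-1)$ (a hypergeometric expression, of which~\eqref{Eqn:AT} is the conformal instance), one must match the right-hand side $w^{2\,p-1}-w^p$, a combination of the two shifted powers $(1+|x|^2)^{-(\beta+1)}$ and $(1+|x|^2)^{-(\beta+2)}$. For $\alpha\neq2$ the fractional operator does not preserve this finite-power structure: the hypergeometric factor reduces to the required low-degree polynomial only when one of its parameters is a non-positive integer. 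This happens trivially when the operator is local ($\alpha=2$, i.e. $s=0$, where profiles and optimizers coincide as in the Barenblatt--Del~Pino--Dolbeault picture) and otherwise only for a single exceptional exponent, which a direct computation identifies with $m=\frac{d+1}{d+2\,(1-s)}$. Away from these values the two equations are incompatible, so $v=w^{2p}$ cannot be stationary.

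I expect the main obstacle to be precisely this fractional computation: controlling $(-\Delta)^{\alpha/2}(1+|x|^2)^{-\beta}$ accurately enough to read off the truncation (resonance) condition, together with the companion observation that the eigenvalue relation $(-\Delta)^{-s}w=\lambda'\,w$ cannot hold for a decaying profile unless the exponents resonate. A secondary difficulty is the rigorous justification of the flux-vanishing step under the stated hypotheses --- in particular the absence of boundary contributions in the nonlocal integration by parts and the finiteness of $\ird{w^p\,|\nabla\Psi|^2}$, which is exactly why boundedness, positivity and finiteness of $\mathsf I[v]$ are assumed.
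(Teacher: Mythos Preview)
Your proposal follows essentially the same route as the paper's proof: argue by contradiction, pass from stationarity to the flux-zero condition $\nabla(-\Delta)^{-s}w+x\,w^p=0$ (up to scaling), take the divergence to turn this into $(-\Delta)^{\alpha/2}w=\nabla\cdot(x\,w^p)$, eliminate the nonlocal operator using~\eqref{EL} to obtain a local first-order radial ODE, solve it to get the explicit profile $w(x)=(1+|x|^2)^{1/(1-p)}$, and then check when such a profile can actually satisfy the fractional equation. The only substantive difference is in that last step: the paper does not carry out the hypergeometric computation you outline but simply invokes~\cite[Theorem~3.1]{MR3239623}, where exactly this computation---determining for which exponents the Barenblatt-type profile solves the fractional equation---is done. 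What you flag as ``the main obstacle'' is thus a known result in the literature, and citing it spares you the hypergeometric analysis. Your more careful justification of the flux-vanishing step (testing against the potential $\Psi$ and using finiteness of $\mathsf I[v]$) is a point the paper glosses over, so on that front your write-up would actually be more complete.
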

In other words, we claim that~\eqref{EL} does not provide stationary solutions of~\eqref{RescaledEqn} as it is the case in the non-fractional case, except in the special case corresponding to $m=\frac{d+1}{d+2\,(1-s)}$, which was observed in~\cite{MR3239623,MR3334174}.

\begin{proof} We argue by contradiction and consider a solution of
\[
\nabla\cdot\left[\sqrt v\(\nabla(-\Delta)^{-s}\,v^{m-\frac12}+x\,\sqrt v\)\right]=0\,.
\]
We deduce that $w=v^{m-\frac12}$ solves
\[
0=\nabla(-\Delta)^{-s}\,v^{m-\frac12}+x\,\sqrt v=\nabla(-\Delta)^{-s}\,w+x\,w^p
\]
where $p=1/(2\,m-1)$ and, as a consequence,
\[
-(-\Delta)^{1-s}\,w+\nabla\cdot(x\,w^p)=0\,.
\]
With $\alpha=2\,(1-s)$, if $w$ simultaneously solves~\eqref{EL} up to a multiplication by a constant, a scaling and a translation, this means that $w^p=v$ is such that
\[
\nabla\cdot\(x\,w^p\)=a\,w^{2\,p-1}-b\,w^p
\]
for some positive constants $a$ and $b$ that can be adjusted. With
\[
a=\frac p{p-1}\quad\mbox{and}\quad b=\frac p{p-1}-d\,,
\]
we find that
\[
w(x)=\(1+|x|^2\)^\frac1{1-p}\quad\forall\,x\in\R^d\,.
\]
The result follows from~\cite[Theorem~3.1]{MR3239623}.\end{proof}

The case $m=m_1$ is almost explicit. In order to illustrate Proposition~\ref{Prop:SelfSimEL}, let us give some details. With the notations of Section~\ref{Sec:Sobolev}, if we choose
\[
v_\star^{m_1-\frac12}=w_\star\quad\Longleftrightarrow\quad v_\star=w_\star^\frac{2\,d}{d-\alpha}\quad\Longleftrightarrow\quad v_\star(x)=\(1+|x|^2\)^{-d}\quad\forall\,x\in\R^d\,,
\]
we obtain that
\begin{multline*}
\nabla\,(-\Delta)^{-1}\,(-\Delta)^{1-s}\,v_\star^{m_1-\frac12}+x\,\sqrt v_\star=\nabla\,(-\Delta)^{-1}\,\mathsf C_{d,\alpha}\,w_\star^\frac{d+\alpha}{d-\alpha}+x\,\sqrt{v_\star}\\
=\nabla\,(-\Delta)^{-1}\(\mathsf C_{d,\alpha}\,v_\star^\frac{d+\alpha}{2\,d}-\mathsf C_{d,2}\,v_\star^\frac{d+2}{2\,d}\)\neq 0
\end{multline*}
unless $\alpha=2$.

\medskip For later purpose, let us define the self-similar profile $\mathfrak B$ as the unique radial solution of
\[
\nabla(-\Delta)^{-s}\,\mathfrak B^{m-\frac12}+x\,\sqrt{\mathfrak B}=0
\]
such that $\ird{\mathfrak B}=\mathsf M$. We recall that these solutions are the so-called \emph{Barenblatt profiles} when $s=0$ and refer to~\cite{BBDGV} for more details. It is straightforward to check that
\[
u_\star(t,x)=\frac1{R^d}\,\mathfrak B\(\log R,\frac xR\)
\]
is a self-similar solution of~\eqref{Eqn} if $R=R(t)$ is given by~\eqref{ODE}.

\section{Global rates of convergence and formal large time asymptotics}\label{Sec:As}

The \emph{generalized R\'enyi entropy power functional}
defined as in Section~\ref{Sec:FFDE} by
\[
\mathsf F[u]=\(\ird{u^m}\)^\sigma
\]
is such that
\[
\mathsf F[u(t,\cdot)]\ge\mathsf F[u_0]+\kappa\,t\quad\forall\,t\ge0
\]
if $u$ solves~\eqref{Eqn} with initial datum $u_0$ on the one hand, and a direct computation shows that the self-similar solution $u_\star$ satisfies the relation
\[
\mathsf F[u_\star(t,\cdot)]=\mathsf F[\mathfrak B]\,(1+\mu\,t)\quad\forall\,t\ge0\,.
\]
\begin{corollary}\label{Cor:SelfSimEstim} With the above notations and under the assumptions of Proposition~\ref{Prop:SelfSimEL}, we have the estimate
\[
\kappa<\kappa_\star:=\mu\,\mathsf F[\mathfrak B]\,.
\]
\end{corollary}
\begin{proof} The inequality $\kappa\le\kappa_\star$ is a straightforward consequence of the above computations, with $u_0=\mathfrak B$. The strict inequality follows from Proposition~\ref{Prop:SelfSimEL}.\end{proof}

Now let us investigate the large time asymptotics at formal level. Inspired by~\cite{MR3298369,MR3191976}, we expect that for any $m\in[m_1,1)$,
\[
u(t,x)\sim u_\star(t,x)\quad\mbox{as}\quad t\to+\infty
\]
as in~\cite[Theorem~1.4]{MR3485132}, and thus
\[
\mathsf F[u(t,\cdot)]=\kappa_\star\,t\,\big(1+o(1)\big)=\mathsf F[\mathfrak B]\,(1+\mu\,t)\,\big(1+o(1)\big)\,.
\]
Using the self-similar change variables~\eqref{TDRS}-\eqref{ODE}, the equivalence $u\sim u_\star$ amounts to
\[
v(t,\cdot)\to\mathfrak B\quad\mbox{as}\quad t\to+\infty\,.
\]
So far this question is open. Next let us investigate at formal level the question of the attraction of the solutions by the self-similar solutions.

If $u$ solves~\eqref{Eqn} and $u_\star$ is the self-similar solution with same mass, we define the relative entropy by
\[
\mathcal E[u]=\tfrac1{m-1}\ird{\(u^m-\,u_\star^m-\,m\,u_\star^{m-1}\,(u-u_\star)\)}\,.
\]
A straightforward computation shows that
\begin{multline*}
\frac d{dt}\mathcal E[u(t,\cdot)]=\tfrac m{m-1}\ird{\(u^{m-1}-\,u_\star^{m-1}\)\frac{\partial u}{\partial t}}-\,m\ird{u_\star^{m-2}\(u-\,u_\star\)\frac{\partial u_\star}{\partial t}}\\
\hspace*{2cm}=-\tfrac m{m-1}\ird{\nabla\(u^{m-1}-\,u_\star^{m-1}\)\cdot\(\sqrt u\,\nabla(-\Delta)^{-s}\,u^{m-\frac12}\)}\\
+\,m\ird{\nabla\(u_\star^{m-2}\(u-\,u_\star\)\)\cdot\(\sqrt{u_\star}\,\nabla(-\Delta)^{-s}\,u_\star^{m-\frac12}\)}\,.
\end{multline*}
At formal level, we investigate the limit as $\varepsilon\to0$ of
\[
u_\varepsilon=u_\star\(1+\varepsilon\,u_\star^{\frac12-m}\,f\)\,.
\]
First, let us notice that
\begin{multline*}
\sqrt{u_\varepsilon}=\sqrt{u_\star}+\tfrac12\,\varepsilon\,u_\star^{1-m}\,f+o(\varepsilon)\,,\quad u_\varepsilon^{m-\frac12}=u_\star^{m-\frac12}+\(m-\tfrac12\)\,\varepsilon\,f+o(\varepsilon)\,,\\
u_\varepsilon^{m-1}=u_\star^{m-1}+(m-1)\,\varepsilon\,\frac f{\sqrt{u_\star}}+\tfrac12\,(m-1)\,(m-2)\,\varepsilon\,\frac{f^2}{u_\star^m}+o(\varepsilon^2)\,.
\end{multline*}
As a consequence, we find that
\[
\mathcal E[u]=\tfrac m2\,\varepsilon^2\ird{u_\star^{1-m}\,f^2}+o(\varepsilon^2)
\]
and
\[
\frac d{dt}\mathcal E[u(t,\cdot)]=-\,\tfrac m2\,\varepsilon^2\,\mathcal Q_{u_\star}[f]+o(\varepsilon^2)
\]
where the quadratic form $\mathcal Q_{u_\star}$ is defined by
\begin{multline*}
\mathcal Q_{u_\star}[f]:=\ird{\!\!\nabla\big(\tfrac f{\sqrt{u_\star}}\big)\cdot\((2\,m-1)\,\sqrt{u_\star}\,\nabla(-\Delta)^{-s}\,f+\,u_\star^{1-m}\,f\,\nabla(-\Delta)^{-s}\,u_\star^{m-\frac12}\)}\\
+(m-2)\ird{\!\!\nabla\(\frac{f^2}{u_\star^m}\)\cdot\(\sqrt{u_\star}\,\nabla(-\Delta)^{-s}\,u_\star^{m-\frac12}\)}\,.
\end{multline*}

If $s=0$, we observe that
\[
\tfrac1{2\,m-1}\,\mathcal Q_{u_\star}[f]=\ird{u_\star\,\left|\nabla\big(\tfrac f{\sqrt{u_\star}}\big)\right|^2}-\tfrac{1-m}2\ird{\((2-m)\,\frac{|\nabla u_\star|^2}{u_\star^2}-\frac{\Delta u_\star}{u_\star}\)f^2}\,.
\]
To characterize the asymptotic stability of $u_\star$, the point is to notice that $\mathcal Q_{u_\star}[f]$ controls $\ird{u_\star^{1-m}\,f^2}$.
\begin{proposition}\label{Prop:Gap} With the previous notations, if $s=0$ and $m_1\le m<1$, there exists a positive constant $\mathcal C$ which depends on only on $m$ and $d$ such that
\[
\mathcal Q_{u_\star}[f]\ge\frac2{1-m}\,\frac{\mathcal C}{R^\mu}\ird{u_\star^{1-m}\,f^2}
\]
for any smooth function $f$ such that $\ird{u_\star^{\frac32-m}\,f}=0$.\end{proposition}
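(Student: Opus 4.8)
The plan is to remove the time dependence by scaling, to observe that when $s=0$ the zeroth order potential in $\mathcal Q_{u_\star}$ collapses to a multiple of the natural weight $u_\star^{1-m}$, and then to read off the gap from a Hardy--Poincar\'e inequality for the Barenblatt profile whose spectrum is known.

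First I would reduce to the profile. When $s=0$ we have $\mathfrak B(x)=(1+|x|^2)^{1/(m-1)}$, up to a dilation which is already carried by the self-similar variables and is harmless since the eigenvalues below are scale invariant, and $u_\star(t,x)=R^{-d}\,\mathfrak B(x/R)$. Writing $f=\sqrt{u_\star}\,g$ with $g(x)=G(x/R)$, the change of variables $x=R\,y$ turns the gradient term of $\mathcal Q_{u_\star}$ into $R^{-2}\int_{\R^d}\mathfrak B\,|\nabla G|^2\,dy$, the potential term into $R^{-2}$ times its profile analogue, and $\int_{\R^d}u_\star^{1-m}\,f^2\,dx$ into $R^{-d(1-m)}\int_{\R^d}\mathfrak B^{2-m}\,G^2\,dy$, while the constraint $\int_{\R^d}u_\star^{3/2-m}\,f\,dx=0$ becomes $\int_{\R^d}\mathfrak B^{2-m}\,G\,dy=0$. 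Since $d(1-m)-2=-\mu$ when $\alpha=2$, the factor $R^{-\mu}$ appears exactly as claimed, and it remains to prove one fixed, $R$-independent inequality for $\mathfrak B$.

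The key observation is a cancellation in the potential. Setting $\beta:=1/(1-m)$, so that $\mathfrak B=(1+|x|^2)^{-\beta}$ and $\mathfrak B^{2-m}=(1+|x|^2)^{-(\beta+1)}$, a direct computation of $|\nabla\mathfrak B|^2/\mathfrak B^2$ and $\Delta\mathfrak B/\mathfrak B$ shows that, after multiplication by $\mathfrak B$, the constant and the $|x|^2$ coefficients coincide, whence
\[
\left((2-m)\,\frac{|\nabla\mathfrak B|^2}{\mathfrak B^2}-\frac{\Delta\mathfrak B}{\mathfrak B}\right)\mathfrak B=2\,\beta\,d\;\mathfrak B^{2-m}\,.
\]
Because $(1-m)\,\beta=1$, the expression of $\mathcal Q_{u_\star}$ recalled above for $s=0$ then simplifies, at $R=1$, to
\[
\tfrac1{2\,m-1}\,\mathcal Q_{\mathfrak B}[f]=\int_{\R^d}\mathfrak B\,|\nabla G|^2\,dy-d\int_{\R^d}\mathfrak B^{2-m}\,G^2\,dy\,.
\]
Thus the claimed gap is equivalent to the Hardy--Poincar\'e inequality $\int_{\R^d}\mathfrak B\,|\nabla G|^2\,dy\ge\lambda_1\int_{\R^d}\mathfrak B^{2-m}\,G^2\,dy$ for all $G$ with $\int_{\R^d}\mathfrak B^{2-m}\,G\,dy=0$, together with the strict inequality $\lambda_1>d$: indeed $\mathcal C$ is then a positive multiple of $\lambda_1-d$.

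Finally I would identify $\lambda_1$. The weights $\mathfrak B\,dy$ and $\mathfrak B^{2-m}\,dy$ are exactly those arising in the linearization of the fast diffusion flow about the Barenblatt profile, whose full spectrum is known; see~\cite{BBDGV}. The self-adjoint operator $G\mapsto-\mathfrak B^{m-2}\,\nabla\cdot(\mathfrak B\,\nabla G)$ on $\mathrm L^2(\mathfrak B^{2-m}\,dy)$ has the constants in its kernel (removed by the orthogonality constraint), admits every harmonic polynomial of degree $\ell$ as an eigenfunction with eigenvalue $2\,\beta\,\ell$ by Euler's relation (in particular $G=x_i$ gives $2\,\beta$), and has the radial mode $G=1+(1-2\,\beta/d)\,|y|^2$ with eigenvalue $4\,\beta-2\,d$, all other eigenvalues being larger, so that $\lambda_1=\min\{2\,\beta,\,4\,\beta-2\,d\}$. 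Since $m\ge m_1=(d-1)/d$ forces $\beta=1/(1-m)\ge d$, both candidates are at least $2\,d$, hence $\lambda_1\ge2\,d>d$ and $\lambda_1-d\ge d>0$. The main obstacle is precisely this last step: the cancellation and the two test eigenfunctions are elementary, but concluding that nothing in the spectrum lies below $\min\{2\,\beta,4\,\beta-2\,d\}$ rests on the complete decomposition into spherical harmonics and radial (hypergeometric) modes, which I would import from~\cite{BBDGV} rather than redo; fortunately only the qualitative bound $\lambda_1>d$, not its sharp value, is needed for the positivity of $\mathcal C$.
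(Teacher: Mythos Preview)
Your proposal is correct and follows the same route as the paper: pass to self-similar variables to extract the $R^{-\mu}$ factor, then reduce the claim to a Hardy--Poincar\'e spectral gap for the Barenblatt weight and invoke the known spectrum from~\cite{BBDGV}. The paper's own proof is a two-line pointer to~\cite[Corollary~1]{1004} and~\cite{BBDGV}; your explicit computation of the cancellation $\big((2-m)\,|\nabla\mathfrak B|^2/\mathfrak B^2-\Delta\mathfrak B/\mathfrak B\big)\,\mathfrak B=2\beta d\,\mathfrak B^{2-m}$ and your identification of the lowest nonzero eigenvalues (with $\lambda_1\ge2d>d$ under $m\ge m_1$) unpack exactly what those references supply.
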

We recall that $R=R(t)$ is given by~\eqref{ODE}, and $u_\star$ depends on $t$, so that the above inequality holds for any $t\ge0$. When $s=0$, $\mu=d\,m-(d-2)$ and $R^\mu(t)=1+\mu\,t$ for any $t\ge0$. By density, it is easy to extend the above inequality to any function $f\in\mathrm L^\infty(dt,\mathrm L^2(\R^d,u_\star^{1-m}\,dx))$ such that $\mathcal Q_{u_\star}[f]$ is finite for any $t\ge0$.
\begin{proof} The result follows from the change of variables~\eqref{TDRS} and~\cite[Corollary~1]{1004}: the above inequality can be rewritten in the form of a spectral gap inequality with a constant which is independent of $t$. Also see~\cite{BBDGV} for earlier spectral gap computations. \end{proof}
In self-similar variables, Proposition~\ref{Prop:Gap} shows that $\mathfrak B$ is linearly stable, and this is enough to prove that $u_\star$ is asymptotically linearly stable when $s=0$. Actually one also knows that $\mathfrak B$ is globally stable when $s=0$, with explicit global rates, which is a far deeper issue. Proposition~\ref{Prop:Gap} immediately raises two open questions:
\begin{enumerate}
\item[(i)] Is $u_\star$ asymptotically linearly stable for any $s\in(0,n)$ ?
\item[(ii)] Can we prove that $u_\star$ is, under the appropriate mass normalization, a global attractor and deduce that the asymptotic rate of convergence of any solution towards $u_\star$ is determined by the spectral gap $\Lambda$ associated with the inequality
\[
\mathcal Q_{\mathfrak B}[g]\ge\Lambda\ird{\mathfrak B^{1-m}\,g^2}
\]
for any admissible function $g$ such that $\ird{\mathfrak B^{\frac32-m}\,g}=0$ ?
\end{enumerate}

\section{On the Bakry-Emery method applied to the fractional fast diffusion equation and generalized R\'enyi entropy powers}\label{Sec:BE}

In this section we do a computation based on the generalized R\'enyi entropy powers in case $s=0$ and emphasize the points which differ when $s>0$. So far such a computation has been done using the so-called \emph{pressure} variable $u^{m-1}$ as, for instance, in~\cite{MR3200617,1501}. Here we base our computations on
\[
f:=u^{m-\frac12}\,.
\]
With this simple change of variables, we obtain that
\[
\frac{\partial f}{\partial t}=u^{m-1}\,\left[\(m-\tfrac12\)\,\Delta f+\tfrac12\,\frac{|\nabla f|^2}f\right]
\]
and compute the time derivative of the Fisher information
\[
\mathsf I[u]=\ird{\left|\nabla f\right|^2}
\]
as
\[
\mathsf I'=-\,2\ird{(\Delta f)\,\frac{\partial f}{\partial t}}=-\,(2\,m-1)\ird{u^{m-1}\,(\Delta f)^2}-\ird{u^{m-1}\,\Delta f\,\frac{|\nabla f|^2}f}\,.
\]
The computation of the r.h.s.~relies on two identities. Here we assume that integrations by parts can be taken without any special precautions. A detailed discussion of this issue can be found in~\cite{dolbeault:hal-01395771}.

\medskip\noindent$\bullet$ \emph{First identity:} we use an integration by parts to obtain
\begin{multline*}
-\ird{u^{m-1}\,\Delta f\,\frac{|\nabla f|^2}f}\\
=\ird{\nabla f\cdot\nabla\(u^{m-1}\,\frac{|\nabla f|^2}f\)}\hspace*{6cm}\\
=\ird{(\nabla u^{m-1})\cdot\nabla f\,\frac{|\nabla f|^2}f}+2\ird{u^{m-1}\,\mathrm Hf:\frac{\nabla f\otimes\nabla f}f}\\
-\ird{u^{m-1}\,\frac{|\nabla f|^4}{f^2}}
\end{multline*}
where $\mathrm Hf$ denotes the Hessian matrix of $f$. After integrating by parts again, we obtain
\be{Id1}
-\ird{u^{m-1}\,\Delta f\,\frac{|\nabla f|^2}f}=2\ird{u^{m-1}\,\mathrm Hf:\frac{\nabla f\otimes\nabla f}f}-\tfrac1{2\,m-1}\ird{u^{m-1}\,\frac{|\nabla f|^4}{f^2}}
\ee
using
\be{Eqn:uv}
\nabla\(u^{m-1}\)=\tfrac{2\,(m-1)}{2\,m-1}\,u^{m-1}\,\frac{\nabla f}f\,.
\ee

\noindent$\bullet$ \emph{Second identity:} using again~\eqref{Eqn:uv}, we find that
\begin{multline*}
-\,(2\,m-1)\ird{u^{m-1}\,(\Delta f)^2}=(2\,m-1)\ird{\nabla f\cdot\nabla\(u^{m-1}\,\Delta f\)}\\
=2\,(m-1)\ird{u^{m-1}\,\Delta f\,\frac{|\nabla f|^2}f}+(2\,m-1)\ird{u^{m-1}\,\nabla f\cdot\nabla\Delta f}\,.
\end{multline*}
Next we use the identity $\nabla f\cdot\nabla\Delta f=\tfrac12\,\Delta\big(|\nabla f|^2\big)-\|\mathrm Hf\|^2$ and obtain
\begin{multline*}
-\,(2\,m-1)\ird{u^{m-1}\,(\Delta f)^2}\\
=2\,(m-1)\ird{u^{m-1}\,\Delta f\,\frac{|\nabla f|^2}f}-\,(2\,m-1)\ird{u^{m-1}\,\|\mathrm Hf\|^2}\\
+\(m-\tfrac12\)\ird{u^{m-1}\,\Delta\big(|\nabla f|^2\big)}
\end{multline*}
where, using twice~\eqref{Eqn:uv}, we obtain that
\begin{multline*}
\ird{u^{m-1}\,\Delta\big(|\nabla f|^2\big)}=\tfrac{2\,(m-1)}{2\,m-1}\ird{|\nabla f|^2\,\nabla\cdot\(u^{m-1}\,\frac{\nabla f}f\)}\\
=\tfrac{2\,(m-1)}{2\,m-1}\ird{u^{m-1}\,\Delta f\,\frac{|\nabla f|^2}f}+\tfrac{2\,(m-1)}{2\,m-1}\,\left[\tfrac{2\,(m-1)}{2\,m-1}-1\right]\ird{u^{m-1}\,\frac{|\nabla f|^4}{f^2}}\\
=\tfrac{2\,(m-1)}{2\,m-1}\ird{u^{m-1}\,\Delta f\,\frac{|\nabla f|^2}f}-\tfrac{2\,(m-1)}{(2\,m-1)^2}\ird{u^{m-1}\,\frac{|\nabla f|^4}{f^2}}\,.
\end{multline*}
Notice that the above computation will cause trouble for a generalization to the case of a function $f$ which is defined as a non-local expression of $u^{m-\frac12}$. Hence
\begin{multline*}
-\,(2\,m-1)\ird{u^{m-1}\,(\Delta f)^2}\\
=3\,(m-1)\ird{u^{m-1}\,\Delta f\,\frac{|\nabla f|^2}f}-\,(2\,m-1)\ird{u^{m-1}\,\|\mathrm Hf\|^2}\\
-\tfrac{m-1}{2\,m-1}\ird{u^{m-1}\,\frac{|\nabla f|^4}{f^2}}\,.
\end{multline*}
We can evaluate $\ird{u^{m-1}\,\Delta f\,\frac{|\nabla f|^2}f}$ using~\eqref{Id1} and get that
\begin{multline*}
-\,(2\,m-1)\ird{u^{m-1}\,(\Delta f)^2}\\
=-\,3\,(m-1)\(2\ird{u^{m-1}\,\mathrm Hf:\frac{\nabla f\otimes\nabla f}f}-\tfrac1{2\,m-1}\ird{u^{m-1}\,\frac{|\nabla f|^4}{f^2}}\)\\
-\,(2\,m-1)\ird{u^{m-1}\,\|\mathrm Hf\|^2}-\tfrac{m-1}{2\,m-1}\ird{u^{m-1}\,\frac{|\nabla f|^4}{f^2}}\,.
\end{multline*}
This provides the second identity
\begin{multline}\label{Id2}
-\,(2\,m-1)\ird{u^{m-1}\,(\Delta f)^2}\\=-\,(2\,m-1)\ird{u^{m-1}\,\|\mathrm Hf\|^2}-\,6\,(m-1)\ird{u^{m-1}\,\mathrm Hf:\frac{\nabla f\otimes\nabla f}f}\\
+\tfrac{2\,(m-1)}{2\,m-1}\ird{u^{m-1}\,\frac{|\nabla f|^4}{f^2}}\,.
\end{multline}

Collecting terms of our two identities~\eqref{Id1} and~\eqref{Id2}, we have found that
\begin{multline*}
\mathsf I'=-\,(2\,m-1)\ird{u^{m-1}\,\|\mathrm Hf\|^2}-\,2\,(3\,m-4)\ird{u^{m-1}\,\mathrm Hf:\frac{\nabla f\otimes\nabla f}f}\\
-\,\tfrac{3-2\,m}{2\,m-1}\ird{u^{m-1}\,\frac{|\nabla f|^4}{f^2}}\,,
\end{multline*}
that we can also write as
\[
\mathsf I'=-\,(2\,m-1)\(1-\tfrac{m_1}m\)\ird{u^{m-1}\,\left|\Delta f-\tfrac1{2\,m-1}\,\frac{|\nabla f|^2}f\right|^2}-\mathcal R
\]
with
\begin{multline*}
\mathcal R:=(2\,m-1)\,\frac{m_1}m\ird{u^{m-1}\,\|\mathrm Hf\|^2}\\
-\,2\(3\,(1-m_1)+\tfrac{m_1}m\)\ird{u^{m-1}\,\mathrm Hf:\frac{\nabla f\otimes\nabla f}f}\\
+\,\tfrac{2\,m\,(1-m_1)+m_1}{m\,(2\,m-1)}\ird{u^{m-1}\,\frac{|\nabla f|^4}{f^2}}\,.
\end{multline*}
Recalling that $m_1=(d-1)/d$, the reader is invited to check that
\[
\mathcal R=\tfrac{2\,m-1}m\ird{u^{m-1}\left\|\mathrm Lf-\tfrac1{2\,m-1}\,\mathrm Mf\right\|^2}
\]
using the notations
\[
\mathrm Lf:=\mathrm Hf-\tfrac1d\,\Delta f\,\mathrm{Id}\quad\mbox{and}\quad\mathrm Mf:=\frac{\nabla f\otimes\nabla f}f-\tfrac1d\,\frac{|\nabla f|^2}f\,\mathrm{Id}\,,
\]
so that
\[
\mathrm Hf:\frac{\nabla f\otimes\nabla f}f=\mathrm Hf:\mathrm Mf+\tfrac1d\,\Delta f\,\frac{|\nabla f|^2}f=\mathrm Lf:\mathrm Mf+\tfrac1d\,\Delta f\,\frac{|\nabla f|^2}f
\]
and
\[
\|\mathrm Lf\|^2=\|\mathrm Hf\|^2-\tfrac1d\,(\Delta f)^2\quad\mbox{and}\quad\|\mathrm Mf\|^2=\(1-\tfrac1d\)\frac{|\nabla f|^4}{f^2}\,.
\]

Finally we introduce the R\'enyi entropy power $\mathsf F$ defined by
\[
\mathsf F:=\mathsf E^\sigma\quad\mbox{with}\quad\sigma:=\frac 2d\,\frac1{1-m}-1\,.
\]
If $1-\frac1d=m_1\le m<1$, proving that, as a function of $t$, $\mathsf F$ is concave amounts to proving that
\[
\mathsf G:=\tfrac{2\,m-1}{2\,m\,(1-m)}\,\mathsf E^{2-\sigma}\,\mathsf F''=(\sigma-1)\,\tfrac{2\,m\,(1-m)}{2\,m-1}\,\mathsf I^2+\mathsf E\,\mathsf I'
\]
is nonpositive, because of~\eqref{EI}. We observe that
\begin{multline*}
\ird{u^{m-1}\,\left|\Delta f-\tfrac1{2\,m-1}\,\frac{|\nabla f|^2}f+\,\tfrac{2\,m}{2\,m-1}\,\frac{\mathsf I}{\mathsf E}\,\sqrt u\right|^2}\\
=\ird{u^{m-1}\,\left|\Delta f-\tfrac1{2\,m-1}\,\frac{|\nabla f|^2}f\right|^2}-\big(\tfrac{2\,m}{2\,m-1}\big)^2\,\frac{\mathsf I^2}{\mathsf E}
\end{multline*}
and, as a consequence,
\begin{multline*}
\mathsf G=-\,(2\,m-1)\(1-\frac{m_1}m\)\mathsf E\ird{u^{m-1}\,\left|\Delta f-\tfrac1{2\,m-1}\,\frac{|\nabla f|^2}f+\,\tfrac{2\,m}{2\,m-1}\,\frac{\mathsf I}{\mathsf E}\,\sqrt u\right|^2}\\
-\,\tfrac{2\,m-1}m\,\mathsf E\ird{u^{m-1}\left\|\mathrm Lf-\tfrac1{2\,m-1}\,\mathrm Mf\right\|^2}\,.
\end{multline*}

Summarizing, when $s=0$ and by assuming that there are no boundary terms in the integrations by parts, we establishes that $\mathsf F$ is a concave increasing function, and shows that
\[
\mathsf F'\ge\kappa=\kappa_\star
\]
with the notations of Sections~\ref{Sec:FFDE} and Corollary~\ref{Cor:SelfSimEstim}. According to~\cite{dolbeault:hal-01395771}, this is equivalent to the proof of Inequality~\eqref{GNS}. The main difference with~\cite{dolbeault:hal-01395771} is that the computations are done in terms of $f=u^{m-\frac12}$ instead of $u^m$.

When $s\neq0$, it would be desirable to do a similar computation with
\[
f=(-\Delta)^{-s/2}\,u^{m-\frac12}
\]
but there are several obstructions. First of all, according to Proposition~\ref{Prop:SelfSimEL}, we cannot expect that $\kappa=\kappa_\star$. In other words, the self-similar solution $u_\star$ is not optimal for Inequality~\eqref{GNS}. We can hope that monotonicity of $\mathsf F'$ holds for some non optimal constant as a consequence of the Bakry-Emery method applied to the fractional fast diffusion equation and generalized R\'enyi entropy powers: proving it is still open. Technically another obstruction arises from the computations as we have no identity equivalent to~\eqref{Eqn:uv}. As in~\cite{MR2817383,MR3294409} in the porous medium case, one could expect that a Stroock-Varadhan identity~\cite{MR1409835,MR0387812,MR0400425} allows to give a proof of the optimality with non-optimal constants, and also characterize the stability of the self-similar solutions as $t\to+\infty$, but this is so far also an open question.

\par\medskip\noindent{\small{\bf Acknowledgements.} This work is supported by a public grant overseen by the French National Research Agency (ANR) as part of the ``Investissements d'Avenir'' program (A.Z., reference: ANR-10-LABX-0098, LabEx SMP) and by the projects \emph{STAB} (J.D., A.Z.) and \emph{Kibord} (J.D.) of the French National Research Agency (ANR). A.Z.~thanks the ERC Advanced Grant BLOWDISOL (Blow-up, dispersion and solitons; PI: Frank Merle) \# 291214 for support. The authors thank Maria J.~Esteban for fruitful discussions and suggestions.
\par\smallskip\noindent\copyright~2016 by the authors. This paper may be reproduced, in its entirety, for non-commercial purposes.}

\end{document}